\documentclass[12pt]{amsart}
\usepackage{amsthm}
\usepackage{amsfonts,amssymb,amsmath}
\theoremstyle{definition}
\begin{document}
\newtheorem*{theorem}{Theorem}
\newtheorem*{lemma}{Lemma}
\title{$G_2$ representations and semistandard tableaux}
\author{William M. McGovern}
\subjclass{22E47,05E10}
\keywords{vector tableau, $G_2$ tableau, exclusion relations, transposition relations, pairing relations}
\begin{abstract}
Continuing the work of \cite{M25} we show how to realize irreducible finite-dimensional representations of the complex group $G$ of type $G_2$ via tableaux, along the way exhibiting generators of the defining ideal for the flag variety of this group.  
\end{abstract}
\maketitle

\section{Introduction}
Generalizing \cite{F97} and \cite{B86}, we showed in \cite{M25} how to construct combinatorial models of the irreducible polynomial representations of a complex classical reductive group $H$ using semistandard Young tableaux with entries in $\{0,1\ldots,n,-1\ldots,-n\}$, with $n$ a positive integer.  Using these tableaux we also constructed generators of the defining ideal of the (projective) flag variety, of $H$, all of which turn out to be homogeneous quadratic polynomials.  Here we do the same thing for the complex semisimple group $G$, of type $G_2$ and its flag variety $\mathcal F$, using entries that are weights of the standard 7-dimensional representation $V$ of $G$ rather than integers.  

\section{Roots of $G$ and the representation $V$}

Fix a maximal torus $H$ of $G$ and denote the roots in the corresponding root system as in \cite{H72}, so that the long roots are $\pm\beta,\pm (3\alpha+\beta)$, and $\pm(3\alpha+2\beta)$, while the short roots are $\pm\alpha,\pm(\alpha+\beta)$, and $\pm(2\alpha+\beta)$.  Choose a set of positive roots so that $\alpha$ is the (unique) short simple root while $\beta$ is the long simple one.  There is a subgroup $G'$ of $G$ of type $A_2$ generated by $H$ and the root subgroups corresponding to the long roots; choose a set of positive roots for it compatible with the one for $G$.  The 7-dimensional representation $V$ mentioned above has weights equal to the short roots of $G$ together with 0, each with multiplicity one; the unique highest weight is $\lambda_1=2\alpha+\beta$, the first fundamental dominant weight of $G$.  There is a symmetric bilinear form $f$ on $V$ preserved by the action of $G$ in which each weight space is paired nonsingularly with the corresponding negative weight space, whence $G$ may be viewed as a subgroup of $SO_7(\mathbb C)$. The representation $V$ decomposes under the action of $G'$ on $V$ admits three subrepresentations, one trivial, another with weights $\alpha_1:=2\alpha+\beta,-\alpha,-\alpha-\beta$, and the other with weights $\alpha_2:=\alpha+\beta,\alpha$, and $-2\alpha-\beta$ (all again with multiplicity one).  Denote the first one by $V_1$ and identify it with $\mathbb C^3$, the defining representation of $G'\cong GL_3(\mathbb C)$.  It has highest weight $\alpha_1$.  The second one, denoted $V_2$, identifies with the dual representation $(\mathbb C^3)^*$ and has highest weight $\alpha_2$; the sum of the highest weights $\alpha_1$ and $\alpha_2$ is $\lambda_2=3\alpha+2\beta$, the second fundamental dominant weight of $G$.  

We work with {\sl (vector) tableaux}, which are by definition Young diagrams with at most two rows in which each box is labelled with a weight occurring in $V$.  The weight of a tableau is defined to be the sum of its entries.  Entries are totally ordered by decreeing that $2\alpha+\beta<-\alpha<-\alpha-\beta<\alpha+\beta<\alpha<-2\alpha-\beta<0$; a tableau is {\sl semistandard} if its entries increase weakly relative to $<$ across rows and strictly down columns.  Then $G$ acts on the complex vector space $V_\lambda$ spanned by all tableaux with a fixed shape $\lambda$ via the action of $G$ on $V$ and mutilinearity. 

\section{Defining relations and the flag variety}

We need to impose relations on the tableaux in $V_\lambda$, thereby defining a quotient space on which $G$ will act irreducibly.  We start with the alternating and exchange relations of \cite{F97}, transported in the obvious way from Young to vector tableaux.  Next we impose the orthogonal relations of \cite{M25}, which state that $\sum_{i=1}^7 T_i=0$, where $T_1,\ldots,T_7$ are obtained from a diagram $T$ with all but two boxes filled by weights, by filling the first box with each of the weights $w$ of $V$ in turn and the second box by $-w$.  Next come the {\sl pairing relations} that $T_1+T_2+T_3=0$, where $T$ is a diagram with with all but two boxes in the same column filled by weights and the $T_I$ are obtained from  $T$ by filling the first box with each of the weights $w$ of $V_1$ (resp.\ $V_2$) and the second box with $-w$.   We then symmetrize these relations under the action of $G$, so that the quotient by them (and the previous relations) is $G$-stable; the resulting infinite set of relations can be replaced by a finite one, since $V_\lambda$ is finite-dimensional.    

Next come the {\sl exclusion relations} that $T=0$ whenever the tableau $T$ includes a column with entries both weights of $V_1$ or both weights of $V_2$; we again symmetrize this relation under the $G$-action.  Finally we have the {\sl transposition relations}, which decree that a tableau remains unchanged if two entries in different columns that are weights of the same $V_i$ are interchanged.  These are again symmetrized under the $G$ action.  Denote by $S_\lambda$ the quotient of $V_\lambda$ by these relations; then $G$ acts on $S_\lambda$.

Since $G$ is a subgroup of $SO_7(\mathbb C)$ it acts on the partial flag variety $\mathcal F$ consisting of all pairs $(V_1,V_2$ of $f$-isotropic subspaces of $V$ with $V_1\subset V_2$ and $\dim V_i = i$.  The $G$-suborbits of $\mathcal F$ are complete as varieties and a generic one is isomorphic to the flag variety of $G$.  The alternating, exchange, and orthogonal relations for $G_2$ tableaux show that any combination of such may be regarded as a polynomial function on $\mathcal F$; we will show that such functions satisfy the remaining relations above, so that they may be regarded as functions on the flag variety of $G$.  

To show that the coordinate functions on the flag variety satisfy the given relations, we begin by noting that the alternating, exchange, and orthogonal relations guarantee that tableaux with a single box span a representation of highest weight $2\alpha+\beta=\lambda_1$ while tableaux with a single column of two boxes span a representation of highest weight $(2\alpha+\beta)+(\alpha+\beta) =\lambda_2$.  Both of these representations appear with multiplicity one in the coordinate ring $C$ of $\mathcal F$.  The exclusion relations follow since the multiplicity of the $(\alpha+\beta)$-weight space in the representation of highest weight $\lambda_2$ is one, this weight space being spanned by the tableau having two boxes in a single column with entries $\alpha+\beta$ and 0.  (The other tableau of this shape and weight, having entries $2\alpha+\beta,$ and $-\alpha$, vanishes by the exclusion relations.)  Similarly, the transposition relation follows by looking at the multiplicity of the $(4\alpha+2\beta)$-weight space in the representation of highest weight $\lambda_1+\lambda_2 = 5\alpha+3\beta$, which is 2.   The pairing relations similarly follow by looking at the multiplicity of the 0 weight space in the representation of highest weight $\lambda_2$.

\section{Restriction to $G'$}

Following the approach of \cite{M25} for the classical groups, we will prove that $G$ acts irreducibly on $S_\lambda$ by using the branching rule for the decomposition of irreducible representations of $G$ to $G'$.  Denote the irreducible representation of $G$ with highest weight $a\lambda_1+b\lambda_2$ by $V_{a,b}$ and its counterpart for $G'$ with highest weight $a\alpha_1+b\alpha_2$ by $W_{a,b}$.  Note that the highest weight $a\lambda_1+b\lambda_2$ corresponds to tableaux of shape $(a+b,b)$ while the highest weight $a\alpha_1+b\alpha_2$ corresponds to tableaux with $a$ boxes labelled $\alpha_1$ and $b$ boxes labelled $\alpha_2$, possibly together with other boxes labelled 0.  

\begin{lemma}
The multiplicity of the representation $W_{c,d}$ in $V_{a,b}$ is\break $\min(a+2b-c-d+1,a+b-c+1,a+b-d+1,c+d-b+1,a+1,b+1)$ if $(c+d)\le a+2b,c,d\le a+b,b\le c+d$ and 0 otherwise.
\end{lemma}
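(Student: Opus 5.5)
We prove the branching formula by induction on $a+b$, using tensor products with the fundamental representations together with the known restrictions of those fundamentals to $G'$.

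\textbf{Base cases.} The restriction of $V=V_{1,0}$ is $W_{1,0}\oplus W_{0,1}\oplus W_{0,0}$, as recorded in Section 2. The adjoint representation $V_{0,1}$ restricts to $\mathfrak{sl}_3\oplus V_1\oplus V_2$, that is, to $W_{1,1}\oplus W_{2,0}\oplus W_{0,2}$. Here we use that the short root spaces form $\mathbb C^3\oplus(\mathbb C^3)^*$ under $G'$, and that $2\alpha_1=4\alpha+2\beta$ and $2\alpha_2=2\alpha+2\beta$ are not weights. So the short root spaces actually form $W_{0,2}$-like pieces of dimension 3. Indeed $\dim W_{2,0}=6$, so the correct decomposition is $V_{0,1}=W_{1,1}\oplus W_{1,0}\oplus W_{0,1}$ in the $G'$ labelling, with $\alpha_1,\alpha_2$ short. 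Both base cases can be checked against the formula directly. For $(a,b)=(0,1)$ the constraints force $1\le c+d\le 2$ and $c,d\le1$, and the minimum is 1, which gives exactly those three summands.

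\textbf{Generating functions.} The cleanest route is to compare generating functions. Weyl's character formula for $G$, restricted to the torus $H$, together with the Weyl character formula for $G'=SL_3$, turns the branching problem into a partition-function computation. Write $\mathrm{Res}\,V_{a,b}=\sum_{w\in W(G)/W(G')}\epsilon(w)\cdots$. Since $W(G)/W(G')$ has two cosets (represented by $1$ and $-1$), the standard Kostant-type branching formula gives
\[
m_{a,b}(c,d)=\sum_{w\in W(G)}\epsilon(w)\,\mathcal P\bigl(w(\lambda+\rho)-\rho'-\mu-\rho''\bigr),
\]
where $\mathcal P$ counts expressions as sums of the positive short roots $\alpha,\alpha+\beta,2\alpha+\beta$. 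Those roots are the roots of $G$ not in $G'$.

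\textbf{Polytope description.} Rather than evaluating this alternating sum, I would prove the lemma by exhibiting the restriction as a lattice-point count. Since $V_{a,b}$ is a summand of $S^aV\otimes S^b V_{0,1}$, restrict that product and apply Pieri/Littlewood--Richardson for $GL_3$. The cleaner approach is to use the tableau model. Restricting from $G$ to $G'$ amounts to forgetting the $G_2$ relations that mix $V_1$, $V_2$ and $0$. A semistandard tableau of shape $(a+b,b)$ then decomposes according to how many entries lie in $V_1$, in $V_2$, and equal $0$. The $G'$-highest weight vectors are indexed by integer parameters: the number $i$ of columns $(\alpha_1,\alpha_2)$, the numbers of entries $0$ in each row, and so on. Together with the exclusion and transposition relations, this collapses to a single free parameter $t$ running over an interval.

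The length of that interval is the minimum of six linear forms. These match the six arguments $a+2b-c-d+1$, $a+b-c+1$, $a+b-d+1$, $c+d-b+1$, $a+1$, $b+1$, corresponding to the six facets cutting out the interval. The inequalities $c+d\le a+2b$, $c,d\le a+b$ and $b\le c+d$ express non-emptiness. I would verify the count by checking dimensions: $\sum_{c,d} m\cdot\dim W_{c,d}=\dim V_{a,b}$ via the Weyl dimension formula. Because both sides are piecewise polynomial, it suffices to check this on each chamber of the minimum.

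\textbf{Main obstacle.} The hard step is independence. The $G'$-highest-weight vectors produced for distinct $t$ must be linearly independent in $V_{a,b}$; a dimension count alone shows only that the total is right if each is nonzero. I would establish this first by the Kostant formula on the boundary cases $a=0$ or $b=0$, then extend by tensoring with $V_{1,0}$ and inducting.
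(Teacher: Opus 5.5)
\textbf{There is a genuine gap, and in fact the statement as printed cannot be proved.} The failing step is your assertion that the interval for the free parameter has length given by exactly the six listed forms. Carry out the parametrization you describe. A semistandard $G'$-highest-weight tableau of shape $(a+b,b)$ and weight $c\alpha_1+d\alpha_2$ is determined by the number $p$ of columns $(\alpha_1,\alpha_2)$: the first row has $c$ entries $\alpha_1$, then $d-p$ entries $\alpha_2$, then zeros; the second row has $p$ entries $\alpha_2$, then zeros. The constraints are
\[
\max(0,c+d-a-b)\le p\le\min(b,c,d,c+d-b).
\]
So the count is the minimum of eight forms: the six listed together with $c+1$ and $d+1$, and these last two are not redundant.

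Here is a concrete counterexample. Take $(a,b)=(2,1)$ and $(c,d)=(2,0)$; the statement predicts $\min(3,2,4,2,3,2)=2$. Now use your Kostant formula $\sum_w\epsilon(w)\mathcal P(w(\lambda+\rho)-(\mu+\rho))$ with $\lambda+\rho=12\alpha+7\beta$, $\mu+\rho=9\alpha+5\beta$, and $\mathcal P(x\alpha+y\beta)=\min(y,x-y)+1$ for $x\ge y\ge 0$ (zero otherwise). Only $w=1$ and $w=s_\beta$ contribute, giving $\mathcal P(3\alpha+2\beta)-\mathcal P(3\alpha)=2-1=1$.

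The dimension check you propose exposes this immediately. For $(a,b)=(2,1)$ the printed multiplicities give $\sum m\dim W_{c,d}=201$, whereas $\dim V_{2,1}=189$. Adding $c+1$ and $d+1$ to the minimum lowers the multiplicities of $W_{2,0}$ and $W_{0,2}$ to $1$ and restores $189$. The dimension-count step of the paper's own proof has the same defect; the formula consistent with the paper's tableau count in Section 4 contains the two extra terms.

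\textbf{Even for the corrected formula, your sketch is not yet a proof.}

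\emph{Circularity.} Using the tableau model is circular: the paper identifies $S_\lambda$ with $V_{a,b}$ only by means of this branching rule. Also, restriction to $G'$ does not ``forget'' the $G$-symmetrized relations.

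\emph{Missing lower bound.} The step you rightly call the crux, a lower bound on multiplicities, is left unsupported. The paper gets it from the complete primality of the coordinate ring $C$:
\begin{itemize}
\item products of $G'$-highest weight vectors in $V(a,b)$ and $V(a',b')$ are $G'$-highest weight vectors in the Cartan component $V(a+a',b+b')$;
\item multiplication by a nonzero element preserves linear independence;
\item two independent highest weight vectors $f,g$ of weight $c\alpha_1+c\alpha_2$ give $k+1$ independent products $f^ig^{k-i}$ in $V(ka,kb)$.
\end{itemize}
This lower bound, together with equality of total dimensions, forces equality.

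\emph{The induction does not substitute for this.} Tensoring with $V_{1,0}$ and inducting on $a+b$ fails as stated. Generically $V_{a,b}\otimes V_{1,0}$ also contains $V_{a+2,b-1}$, which has the same value of $a+b$ as $V_{a+1,b}$. You would need a different induction order (e.g.\ the height $3a+5b$ of $a\lambda_1+b\lambda_2$), the full tensor product rule for $V_{1,0}$, and a check that the piecewise-linear formula satisfies the resulting recursion.

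\emph{A more direct route.} The Kostant formula you set aside would give a complete proof. With the explicit partition function above, the lemma reduces to evaluating a finite alternating sum of piecewise-linear functions. Carrying this out would also have revealed the missing terms.
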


\begin{proof}
We argue as in the proof of \cite[Theorem 4.1]{M90}.  The argument there coupled with the complete primality of the coordinate ring $C$ shows that whenever representations $W(c,d) $ and $W(c',d')$ appear in $V(a,b)$ and $V(a',b')$ with respective multiplicities $m$ and at least one, then $W(c+c',d+d')$ appears in $V(a+a'.b+b')$ with multiplicity at least $m$; also, whenever $W(c,c)$ appears in $V(a,b)$ with multiplicity at least two, then $W(kc,kc)$ appears in $V(ka,kb)$ with multiplicity at least $k+1$.  Using this one shows that $V(a,b)$ contains the sum of the representations $W(c,d)$ with the given multiplicities.  By counting dimensions for finitely many choices of parameters and using the polynomiality of the dimensions of $V(a,b)$ and $W(c,d)$ as functions of $(a,b)$ and $(c,d)$, respectively (with the polynomials of degree 6), one shows that $V(a,b)$ and the given sum of representations $W(c,d)$ have the same dimension, whence $V(a,b)$ coincides as a representation of $G'$ with this sum.
\end{proof}

Observe that a semistandard tableau corresponding to a highest weight vector for the action of $G'$ has all entries equal to $\alpha_1,\alpha_2$, or 0 and all $\alpha_1$s in the first row lying to the left of all $\alpha_2$s in that row.  The second row has all entries equal to $\alpha_2$ or 0, with every $\alpha_2$ lying immediately below an $\alpha_1$ in the row above and no 0 lying immediately below another one.  From this it is easy to check that the number of such semistandard tableaux $T$ with shape $(a+b,b)$ and weight $c\alpha_1+d\alpha_2$ coincides with the multiplicity of $W_(c,d)$ in $V_{a,b}$.  Note however that the $G'$-subrepresentation $R_T$ generated by $T$ does {\sl not} have partition corresponding to the shape of $T$; instead, if $T$ has weight $c\alpha_1+d\alpha_2$, then this corresponding partition is $(c+d,d,0)$.

\section{$G_2$ tableaux}
. 
We call a vector tableau $T$ a {\sl $G_2$ tableau} if it is semistandard, the weights of each $V_i$ in $T$ occur in $<$-increasing order from left to right with no two such weights in the same column, and there are no columns with entries $\alpha_1$ and $-\alpha_1$.  It is not difficult to check that $G_2$ tableaux of shape $\lambda$ provide a basis for the representation $S_\lambda$.  Then we have

\begin{theorem}
For all partitions $\lambda$ with at most two rows, the group $G$ acts irreducibly on $S_\lambda$; the resulting representation has highest weight $a\lambda_1+b\lambda_2$ if the shape of $\lambda$ is $(a+b,b)$.  Every irreducible finite-dimensional representation of $G$ arises in this way and the direct sum of all the $S_\lambda$ is isomorphic to the coordinate ring $C$ as a representation of $G$.  The relations generate the defining ideal of the flag variety $\mathcal F$.
\end{theorem}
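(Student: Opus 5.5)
We compare $S_\lambda$ with the corresponding graded piece of the coordinate ring $C$. Let $\lambda=(a+b,b)$. By Section 3, every tableau of shape $\lambda$ defines a polynomial function on $\mathcal F$, of multidegree $(a,b)$ in the Plücker-type coordinates of the isotropic line and plane. Restricting to the generic $G$-orbit, which is the flag variety of $G$, we get functions that satisfy the alternating, exchange, orthogonal, pairing, exclusion and transposition relations. So there is a $G$-equivariant map $\phi_\lambda\colon S_\lambda\to C_{a,b}$, where $C_{a,b}$ is the multidegree $(a,b)$ component of the coordinate ring of the flag variety of $G$. By Borel--Weil, $C_{a,b}\cong V_{a,b}^*$, which is irreducible. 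Moreover $C_{a,b}$ is generated as a vector space by products of $a$ one-box functions and $b$ one-column two-box functions, i.e. by images of tableaux. Hence $\phi_\lambda$ is surjective. It suffices to show $\dim S_\lambda\le\dim V_{a,b}$; then $\phi_\lambda$ is an isomorphism and $S_\lambda$ is irreducible. Since $V_{a,b}$ is self-dual for $G_2$ ($-1$ lies in the Weyl group), $S_\lambda\cong V_{a,b}$, with highest weight $a\lambda_1+b\lambda_2$.

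For the dimension bound we use the asserted fact that $G_2$ tableaux of shape $\lambda$ span $S_\lambda$. We then compare this spanning set with $V_{a,b}$ by restricting to $G'$, following the approach of \cite{M25}:
\begin{itemize}
\item We grade $S_\lambda$ by $G'$-weight and look at the $G'$-highest weight tableaux described after the Lemma.
\item We show that each $G_2$ tableau lies in the $G'$-span of a semistandard $G'$-highest weight tableau $T$ of weight $c\alpha_1+d\alpha_2$.
\item We show that the $G'$-module $R_T$ generated by such a $T$ is a quotient of $W_{c,d}$, so $\dim R_T\le\dim W_{c,d}$.
\end{itemize}
For the last step we use the transposition and exclusion relations, which make the $V_1$-entries behave symmetrically and the $V_2$-entries likewise. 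Since the pairing relations only contract $V_1$ against $V_2$ in a single column, the span of $G'$-translates of $T$ is a quotient of the Schur functor image with partition $(c+d,d,0)$, namely $W_{c,d}$. Summing over $T$ and using the count after the Lemma, which matches the number of such $T$ with the multiplicities in the Lemma, gives
\[
\dim S_\lambda\le\sum_{c,d}\operatorname{mult}(W_{c,d},V_{a,b})\dim W_{c,d}=\dim V_{a,b}.
\]

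\textbf{Main obstacle.} The hard step is showing that the $G'$-submodules $R_T$ exhaust $S_\lambda$, i.e. that every $G_2$ tableau lies in $\sum_T R_T$. I would prove this by induction on a weight/lexicographic order: apply a $G'$-raising operator to an arbitrary $G_2$ tableau and straighten the result using the exchange and transposition relations. An alternative is to compare characters directly, by showing that the number of $G_2$ tableaux of each weight equals the weight multiplicity of $V_{a,b}$, computed from the Lemma as a sum of $G'$ characters.

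\textbf{Remaining claims.} Every irreducible representation of $G$ has the form $V_{a,b}$, so every irreducible representation arises as some $S_\lambda$. The isomorphisms $\phi_\lambda$ assemble into a graded isomorphism $\bigoplus_\lambda S_\lambda\cong C$. Consequently, the kernel of the map from the polynomial ring on the one-box and one-column tableau coordinates onto $C$ is spanned, degree by degree, by the imposed relations. These relations are quadratic in the relevant variables, so they generate the defining ideal of $\mathcal F$.
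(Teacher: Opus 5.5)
\textbf{Verdict: the overall plan matches the paper's, but the central step is left unproved, your fallback for it fails, and the first step relies on Section 3 claims that do not hold as stated.}

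\textbf{The missing step.} Your architecture is the paper's: a surjection from $S_\lambda$ onto the degree-$(a,b)$ piece of $C$ supplied by Section 3, the branching Lemma, the count of semistandard $G'$-highest weight tableaux $T$ of weight $c\alpha_1+d\alpha_2$, and the resulting bound $\dim S_\lambda\le\dim V_{a,b}$. Your Schur functor argument in the third bullet is not needed. Since $\alpha_1$, $\alpha_2$, $0$ are the highest weights of $V_1$, $V_2$ and the trivial summand, each $T$ is killed by the root vectors for $\beta$, $3\alpha+\beta$, $3\alpha+2\beta$. Hence $R_T$ is either $0$ or irreducible and isomorphic to $W_{c,d}$.

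The gap is the step you flag as the main obstacle. By complete reducibility, $S_\lambda=\sum_T R_T$ is equivalent to this: every vector of $S_\lambda$ killed by the long positive root vectors is a linear combination of the tableaux $T$. That is exactly the last sentence of the paper's proof, which credits it to the transposition relations. Your induction does not reach it. If every raising operator sends a $G_2$ tableau $S$ into $M=\sum_T R_T$, you only learn that $S+M$ is a $G'$-highest weight vector of $S_\lambda/M$. Concluding $S\in M$ requires precisely that no such vectors lie outside the span of the $T$'s.

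What is missing is a list of the $G'$-highest weight vectors of $V_\lambda$ not of the form $T$, and a check that the relations rewrite each of them in terms of the $T$'s. Write $e_w$ for a weight vector of weight $w$ and $T(w,-w)$ for the one-row tableau with entries $w,-w$. Examples to handle are:
\begin{itemize}
\item the $G'$-invariant $\sum_{w\in V_1}T(w,-w)$ of shape $(2)$, handled by the orthogonal relation;
\item the columns $(\alpha_1,-\alpha)$ and $(\alpha+\beta,\alpha)$ of shape $(1,1)$, which are highest weight vectors of $\Lambda^2V_1\cong V_2$ and $\Lambda^2V_2\cong V_1$;
\item antisymmetrizations of two entries of the same $V_i$ lying in different columns, which is where transposition enters.
\end{itemize}

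\textbf{The fallback fails.} With the definition of $G_2$ tableau as given, for $\lambda=(2)$ every weakly increasing row qualifies. That gives $28$ tableaux, four of weight $0$, whereas $\dim V_{2,0}=27$ with a $3$-dimensional zero weight space. The orthogonal relation is a nontrivial dependence among those four. So only the spanning half of the paper's basis claim is usable, and the bound must come from the $G'$-highest weight analysis.

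\textbf{Caution about Section 3.} Be careful citing Section 3 for the existence of $\phi_\lambda$. Among the weights of $V$, the $(2\alpha+\beta)$-strings through $2\alpha+\beta$ and $\alpha+\beta$ are $2\alpha+\beta,0,-2\alpha-\beta$ and $\alpha+\beta,-\alpha$. Hence $X_{-(2\alpha+\beta)}(e_{\alpha_1}\wedge e_{\alpha+\beta})$ is a combination of $e_0\wedge e_{\alpha+\beta}$ and $e_{\alpha_1}\wedge e_{-\alpha}$ with both coefficients nonzero. This vector lies in the span of the Pl\"ucker vectors of the flag variety of $G$. So the column $(\alpha_1,-\alpha)$ does not vanish on that variety. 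What actually holds in weight $\alpha+\beta$ is a linear relation between that column and the column $(\alpha+\beta,0)$. The exclusion relations, as stated, are therefore not satisfied by the coordinate functions. This step of your argument needs a corrected set of relations, not just a citation.
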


\begin{proof}
This follows at once from the branching rule and the construction, since the tableaux $T$ defined in the last section correspond to highest weight vectors for the $G'$ action with the given highest weights.  The transposition relations guarantee that these are the only highest weight vectors for this action up to scalar multiples.
\end{proof}


\begin{thebibliography}{F97}

\bibitem[B86]{B86} A.\ Berele, \textsl{Construction of Sp modules by tableaux}, Linear and Multilinear Algebra~{\bf19} (1986), 299--307.

\bibitem[F97]{F97} W.\ Fulton, \textsl{Young tableaux: with applications to representation theory and geometry}, London Math.\ Soc.\ Student Texts~{\bf35}, Cambridge University Press, New York, 1997.

\bibitem[H72]{H72} J.\ E.\ Humphreys, \textsl{Introduction to Lie Algebras and Representation Theory}, Springer-Verlag, Berlin-Heidelberg-New York, 1972.

\bibitem[M90]{M90} W.\ M.\ McGovern, \textsl{A branching law for Spin$(7,\mathbb C)$ and its applications to unipotent representations}, J.\ Alg.~{\bf130} (1990), 166--175.

\bibitem[M25]{M25} W.\ M.\ McGovern, \textsl{Symplectic and orthogonal tableaux revisited}, arxiv:2504.14030, 2025.

\end{thebibliography}
\end{document}